\title{New divergence measures between persistence diagrams and stability of vectorizations}
\author[1]{Alessandro Bravetti}
\author[2]{Martín Mijangos\thanks{$^\ast$Corresponding author. Email: martinmij@gmail.com}}
\author[3]{Pablo Padilla}
\affil[1]{School of Science and Technology, University of Camerino, Via Madonna delle Carceri, 9, Camerino, 62032, Italy}
\affil[2,3]{Instituto de Investigaciones en Matem\'aticas Aplicadas y en Sistemas, Universidad Nacional Aut\'onoma de M\'exico, Ciudad de M\'exico, M\'exico}
\date{}
\newcommand{\D}{\mathcal{D}}
\newcommand{\N}{\mathbb{N}}
\newcommand{\f}{\longrightarrow}
\newcommand{\R}{\mathbb{R}}
\newtheorem{theorem}{Theorem}[section]
\newtheorem{lemma}[theorem]{Lemma}
\newtheorem{definition}[theorem]{Definition}
\newtheorem{remark}[theorem]{Remark}
\newtheorem{example}[theorem]{Example}
\begin{document}

\maketitle

\begin{abstract}
Given a filtration of simplicial complexes, one usually applies persistent homology and summarizes the results in barcodes. 
Then, in order to extract statistical information from these barcodes, one needs to compute statistical indicators over the bars of the barcode.
An issue with this approach is that usually infinite bars must be deleted or cut to finite ones; 
however, so far there is no consensus on how to perform this procedure. 
In this work we propose for the first time a systematic way to analyze barcodes through the use of statistical indicators. 
Our approach is based on the minimization of a divergence measure that generalizes the standard Wasserstein or bottleneck distance to a new asymmetric distance-like function that we introduce and which is interesting on its own. 
In particular, we analyze the topology induced by this divergence and the stability of known vectorizations  with respect to
this topology.
\end{abstract}

\section{Introduction}
 Topological data analysis is a recently-developed area of mathematics~\cite{edelsbrunner2016topological, otter2017roadmap} where tools such as persistent homology are used to analyze data. The result of applying persistent homology is summarized in the so-called \emph{persistent diagrams} and \emph{persistence barcodes}. Commonly these structures are so complex that it is impossible to extract information just by a simple glance. Therefore one needs to apply some statistics or to map them to vector spaces where statistics or machine learning models can be applied. Common statistics over the persistence barcodes are computed over the lengths of the bars. Typical examples are the mean, the variance or the Shannon entropy of the distribution of the lengths of the bars~\cite{mijangos2024persistent,rucco2014entropy}.  
 A problem with this approach is that it only directly applies to barcodes where bars have finite lengths. To overcome this issue, the infinite bars are usually dropped or cut to have finite lengths, without a general consensus on how to perform this procedure. 
 
 In this work we propose a way to reduce a ``full'' persistence diagram 
 (a diagram including all the bars, either of finite or of infinite length) 
 to a ``reduced'' one (one with only bars of finite lengths) which is the ``closest'' to the original one in a precise sense. To do so, we propose the minimization of a certain distance-like function that is motivated by information theory~\cite{cover1999elements,csiszar2022information,polyanskiy2014lecture} and information geometry~\cite{amari2016information,ay2017information,nielsen2020elementary}.
Indeed, even though there exists already distance functions 
defined over the set of persistence barcodes, such as the Wasserstein or the bottleneck distance \cite{computational_top}, 
these take infinite values whenever the number of infinite bars between the two barcodes is different, which is the typical situation. 
Therefore, motivated by the use in information theory and information geometry of asymmetric, distance-like functions called ``divergence measures'', 
we introduce a particularly well-suited generalization of the Wasserstein distance which is a divergence measure over the set of barcodes, or equivalently, over the set of persistence diagrams. The most important property of this new divergence measure is that it allows us to measure finite (asymmetric) distances between two barcodes, even when they have a different number of infinite bars. 

 The paper is organized as follows. In Section \ref{sec:intro} we recall the relevant concepts from topological data analysis. 
 In Section \ref{sec:divergences} we define a family of divergences which we call \textit{$(f,p)$-Wasserstein divergences}. 
 These will allow us 
 in Section~\ref{sec:stat} to look for an optimal way to cut the infinite bars in a barcode. 
  Finally, by leveraging these divergences, we will extend continuous functions defined over the set of persistence diagrams with finite coordinates to the set of all persistence diagrams, thus obtaining a kind of stability result that constitutes the main finding of this work. 

\section{Background}\label{sec:intro}
In this section we recall some concepts of topological data analysis. More details can be found in \cite{edels-harer, computational_top, otter2017roadmap}.

\subsection{Persistent homology}
When dealing with data analysis we typically want a way to assess the importance of different aspects.  
For instance, we may want to identify noise from real features, 
or we may want to pay special attention to some features within a certain threshold. 
This is what persistent homology is useful for: while
homology alone captures the topological features of a ``static'' simplicial complex, 
the key idea of persistent homology is to take simplicial complexes at different scales to  
represent our data, and then assess the importance of different features based on 
how long they persist when we move 
along all scales.
We make this idea precise as follows.
\begin{definition}\rm
A \textit{filtration} of  a simplicial complex $K$ is a sequence of nested simplicial 
complexes $K_0\subseteq K_1\subseteq\cdots\subseteq K_n\subseteq K$.
Moreover, we set $K_i=\emptyset$ for $i<0$ and $K_i=K$ for $i>n$.
\end{definition}

An inclusion $\rho^{i,j}:K_i\hookrightarrow K_j$, $i\leq j$, induces a map 
$$(\rho^{i,j}_*)_p:H_p(K_i)\longrightarrow H_p(K_j)$$ 
for all $p\in\mathbb{Z}$, where $H_p(K_i)$ is the $p$th homology group of $K_i$.
To simplify the notation, from now on
we will drop the subscript~$*$ and just denote these maps as $\rho^{i,j}_p$. 
Given a class $c$ in $H_p(K_i)$ (which can be considered as a $p$-dimensional hole) 
we can track its persistence as we move along  the filtration by means of the maps $\rho^{i,j}_p$
according to the next definition.

\begin{definition}\rm
Given a homology class $c\in H_p(K_i)$ we define its birth and death as~\cite{edels-harer}.
\begin{itemize}
\item[i)] We say that $c$ \textit{is born} at $K_i$ if $c\in H_p(K_i)\backslash\{0\}$ and $c\notin \mbox{im}(\rho^{i-1, i}_p).$
\item[ii)] We say that $c$ \textit{dies entering}  $K_j$ 
if $\rho^{i, j-1}_p(c)\notin \mbox{im}(\rho^{i-1,j-1}_p)$ but $\rho^{i, j}_p(c)\in \mbox{im}(\rho^{i-1, j}_p).$
\end{itemize}
Then \emph{the persistence of $c$} is the difference $d-b$ where $b$ represents the level 
in the filtration where $c$ was born and $d$ the value of the parameter where it died. 
If a class never dies we take $d=\infty$.
The multiset given by the intervals $[b, d)$ of all the $p$-homology classes appearing in a filtration $F$ is called 
\textit{the p-persitence barcode}, denoted by $Bc_p(F)$. 
\end{definition}
An alternative representation of the barcodes are the persistence diagrams. 
The \textit{$p$-persistence diagram} associated to $F$, denoted $Dgm_p(F)$, 
is the multiset in $\bar{\R}^2$ consisting of all the points $(b, d)$ where $[b, d)\in Bc_p(F)$,
together with
all the points of the form $(x, x)\in \R^2$, which are considered with infinite multiplicity. 
More generally, we can define a persistence diagram as follows.
\begin{definition}\rm
    A persistence diagram is the union of a finite multiset of points $(x, y)\in \bar{\R}^2$ where $x<y$,
    with the set of points of the form $(x, x)\in \bar{\R}^2$, taken with infinite multiplicity.
\end{definition}

\subsection{Metrics over the space of persistence diagrams}
Denote by $\D$ the set of all persistence diagrams. 
There are two well-known metrics defined over this set, the Wasserstein and bottleneck distance, 
defined as follows~\cite{computational_top}. 

\begin{definition}\rm
    Let $1\leq p<\infty$ and $A, B\in \D$. The \emph{$p$-Wasserstein distance} $d_p$ is defined as
    \begin{equation}\label{eq:wasserstein}
    d_p(A, B)=\left(\inf_\gamma\sum_{a\in A}||a-\gamma(a)||_\infty^p \right)^{1/p}
    \end{equation}
    where the infimum runs over all the bijections $\gamma:A\f B$ and
    we define as usual $$ ||a-b||_\infty=\max\{|x_1-x_2|, |y_1-y_2|\}$$
    for $a=(x_1, y_1)$ and $b=(x_2, y_2)$,
    setting $|y_1-y_2|$ to zero if both $y_1$ and $y_2$ are infinite.
\end{definition}   
\begin{definition}\rm
    Given $A, B\in \D$, the \emph{bottleneck distance} $d_\infty$ is defined as 
    $$d_\infty(A, B)=\inf_{\gamma}\sup_{a\in A}\{||a-\gamma(a)||_\infty\}.  $$
\end{definition}





\section{Divergences over the space of perstistence diagrams}\label{sec:divergences}\rm
In this section we introduce a new family of divergence measures defined over the space of persistence diagrams.

Consider two persistence diagrams $A$ and $B$. It is straightforward to see that $d_p(A, B)$ is finite for any $p\geq 1$ if and only if
$A$ and $B$ have the same number of points with infinite death times, which we will call \textit{infinite points} from now on.
We now define a divergence between persistence diagrams whose value can be finite even when the diagrams have a different number of infinite points. 


\begin{definition}\label{flass}\rm
Let $f:[0, \infty]\longrightarrow [0,\infty)$ be a continuous function which satisfies the properties
\begin{flalign}\label{eq:conditions}
\begin{split}
&\mbox{{\rm i)} }f(x) \leq x \mbox{ for all } x\in [0, \infty], \\
 &\mbox{{\rm ii)} }f(x+y)\leq f(x)+f(y) \mbox{ for any } x,\,y\geq 0\,.
\end{split}&&
\end{flalign}
Then we say that $f$ is \emph{a sub-diagonal and sub-additive function}.
\end{definition}

\begin{definition}\rm
    Let $\lambda:\bar\R^2\longrightarrow \bar\R^2$ be the function defined by 
$$\lambda(x, y):=\left(\frac{x+y}{2}, \frac{x+y}{2}\right),$$ that is, $\lambda$ is the projection onto the diagonal. 
\end{definition}

\begin{definition}\label{def:divergence}\rm
    Let $1\leq p<\infty$ and $A, B\in \D$. We define the
    \emph{$(f,p)$-Wasserstein divergence} from $A$ to $B$, denoted $D_p^{f}(A||B)$, as
    \begin{equation}\label{eq:def_div}
    D_p^{f}(A||B)=\left(\inf_\gamma\left\{\sum_{a\in A}
    ||a-\gamma(a)||_\infty^p+\sum_{b\in B\setminus \gamma(A)}f(||b-\lambda(b)||_\infty)^p\right\}\right)^{1/p}     
    \end{equation}
where the infimum runs over all the injections $\gamma$ from $A$ to $B$,
$\lambda$ is the projection to the diagonal and 
$f$ is a sub-diagonal and sub-additive function.
\end{definition}


Note that the $(f,p)$-Wasserstein divergence is not necessarily symmetric. In order to emphasize this, 
the arguments of the function $D_p^{f}$ are delimited by vertical bars instead of a comma.  
We can define the \textit{$(f,p)$-bottleneck divergence}, $D_\infty^{f}$, generalizing in an analogous way the bottleneck distance.

\begin{example}\label{ex:Ex1}\rm
    Let 
    $$f=\frac{e^x-1}{e^x+1}$$ 
    be the translation of the logistic function and
    consider the persistence diagrams $A$ and $B$ of Figure \ref{fig:div}. 
    Note that the only points outside the diagonal in these diagrams are $(2, 10)$ and $(5, \infty)$ in $A$ and the point $(3, 11)$ in $B$. 
    \begin{figure}[h!]
        \centering
        \includegraphics[scale=0.43]{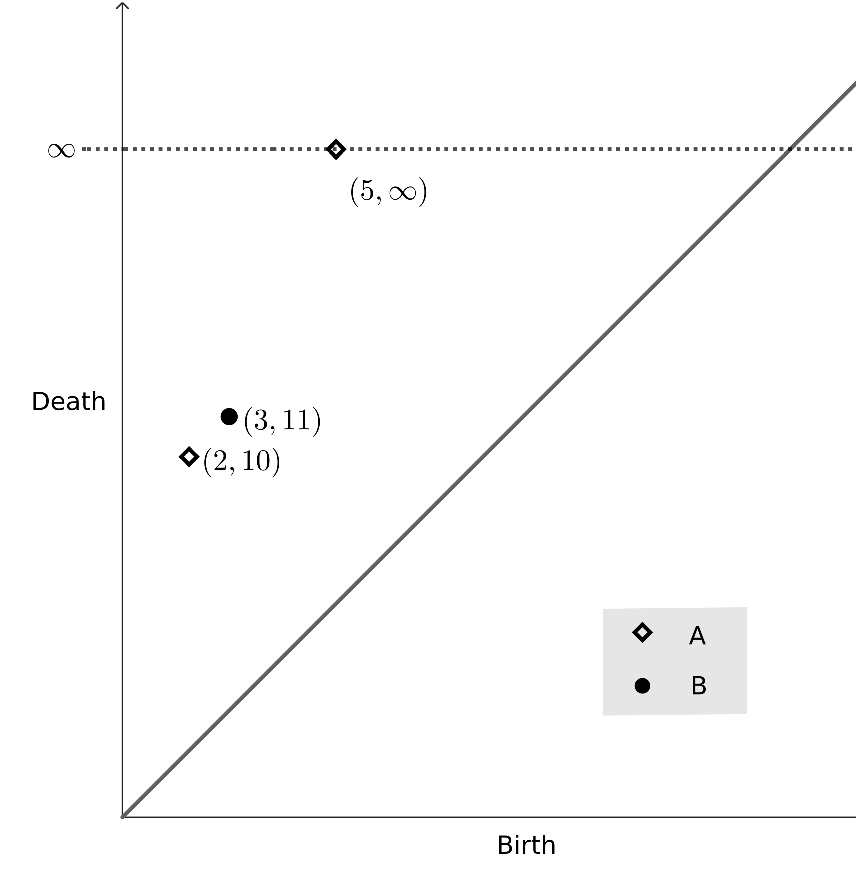}
        \caption{Persistence diagrams in Example~\ref{ex:Ex1}.}
        \label{fig:div}
    \end{figure}
    Moreover, every injection from $A$ to $B$ maps the infinite point $(5, \infty)$ to a finite point in $B$.
    Then the first sum in~\eqref{eq:def_div} is always infinite, which in turn implies that 
    $$D_p^{f}(A||B)=\infty$$ for all $p\geq 1$. 
    On the other hand, it is quite easy to see that the minimizing injection 
    from $B$ to $A$ is the one that fixes the diagonal and maps the point $(3, 11)$ to $(2, 10)$. In this case have that 
    \begin{align*}
        D_p^{f}(B||A)&=\left(||(2, 10)-(3,11)||_\infty^p+f(||(5, \infty)-(\infty, \infty)||_\infty)^p\right)^{1/p}\\
                &=(1+f(\infty)^p)^{1/p}\\
                &=\sqrt[p]{2}.
    \end{align*}
    Therefore we conclude that the $(f,p)$-Wasserstein divergence is in general not symmetric and, 
    more importantly, it can take finite values even when the 
    Wasserstein distance $d_p(A, B)$ is infinite for all $p\geq 1$.  
\end{example}

In the next theorem we summarize some useful properties of this family of divergences.
\begin{theorem}\label{thm:properties_div}
    Let $p\geq 1$ and $A, B, C\in \D$. $D_p^{f}(-|| -)$ has the following properties:
    \begin{enumerate}
        \item[\rm{(i)}] $D_p^{f}(A || B)\geq 0$.
        \item[\rm{(ii)}] If $f(x)>0$ for all $x>0$, $D_p^{f}(A||B)=0$ if and only if 
        $A=B$.
        \item[\rm{(iii)}] $D_p^{f}(A || B)\leq d_p(A, B)$ and $D_p^{f}(B || A)\leq d_p(A, B)$.
        \item[\rm{(iv)}] $D_p^{f}(A||C)\leq D_p^{f}(A||B)+D_p^{f}(B||C)$ for any $B\in \D$.
    \end{enumerate}
\end{theorem}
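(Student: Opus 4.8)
The plan is to treat (i)--(iii) as essentially formal consequences of \eqref{eq:def_div} and to concentrate the real work on the triangle inequality (iv). For (i), every summand in \eqref{eq:def_div} is nonnegative, so $D_p^f(A||B)\ge 0$. For (ii), if $A=B$ then the identity inclusion $\gamma$ makes the first sum vanish and leaves $B\setminus\gamma(A)=\varnothing$, so $D_p^f(A||B)=0$. For the converse I would first observe that the infimum in \eqref{eq:def_div} is attained: only finitely many points of $A$ and of $B$ lie off the diagonal, the diagonal of $B$ is an inexhaustible supply of cost-zero targets, the cheapest diagonal target for an off-diagonal $a$ is $\lambda(a)$, and covering an off-diagonal $b$ by a diagonal point of $A$ is never advantageous because $f(\|b-\lambda(b)\|_\infty)\le\|b-\lambda(b)\|_\infty$; so the optimization reduces to a finite set of partial matchings of the off-diagonal points. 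If such an optimal $\gamma$ gives $D_p^f(A||B)=0$, then $\|a-\gamma(a)\|_\infty=0$ for all $a\in A$, i.e.\ $\gamma$ realizes $A$ as a sub-multiset of $B$ via the identity, and $f(\|b-\lambda(b)\|_\infty)=0$ for every $b\in B\setminus\gamma(A)$; under the hypothesis $f>0$ on $(0,\infty)$ this forces $B\setminus\gamma(A)$ to have no off-diagonal point, so the off-diagonal parts of $A$ and $B$ agree and $A=B$. For (iii), a bijection $\gamma:A\to B$ is in particular an injection with $B\setminus\gamma(A)=\varnothing$, whence $D_p^f(A||B)^p\le\sum_{a\in A}\|a-\gamma(a)\|_\infty^p$; taking the infimum over bijections gives $D_p^f(A||B)\le d_p(A,B)$, and applying the same to $\gamma^{-1}:B\to A$ (for which $\sum_{b\in B}\|b-\gamma^{-1}(b)\|_\infty^p=\sum_{a\in A}\|a-\gamma(a)\|_\infty^p$) gives $D_p^f(B||A)\le d_p(A,B)$; when $d_p(A,B)=\infty$ both statements are vacuous.

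The core is (iv). Given arbitrary injections $\alpha:A\to B$ and $\beta:B\to C$, I would estimate $D_p^f(A||C)$ using the composite injection $\beta\circ\alpha:A\to C$. Because $\alpha(A)\subseteq B$ and $\beta$ is injective, $C\setminus\beta\alpha(A)=(C\setminus\beta(B))\sqcup\beta(B\setminus\alpha(A))$, so the cost of $\beta\circ\alpha$ in \eqref{eq:def_div} is a sum over the disjoint index set $A\sqcup(B\setminus\alpha(A))\sqcup(C\setminus\beta(B))$. I would dominate each index's contribution by $y_i+z_i$ with $y_i,z_i\ge 0$, steering the $y_i$'s into the cost of $\alpha$ and the $z_i$'s into the cost of $\beta$: for $a\in A$, the $\|\cdot\|_\infty$ triangle inequality gives $\|a-\beta\alpha(a)\|_\infty\le\|a-\alpha(a)\|_\infty+\|\alpha(a)-\beta\alpha(a)\|_\infty$; for $c\in C\setminus\beta(B)$ the penalty $f(\|c-\lambda(c)\|_\infty)$ is assigned wholly to the $\beta$-side; and for $b\in B\setminus\alpha(A)$ the penalty $f(\|\beta(b)-\lambda(\beta(b))\|_\infty)$ is split as $f(\|b-\lambda(b)\|_\infty)+\|b-\beta(b)\|_\infty$. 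Applying Minkowski's inequality for $\ell^p$ to $\sum_i(y_i+z_i)^p$, the $y$-terms reassemble --- using injectivity of $\alpha$ to re-index $\sum_{a}\|a-\alpha(a)\|_\infty^p$ --- into the cost of $\alpha$, and the $z$-terms reassemble, via $\sum_{a}\|\alpha(a)-\beta\alpha(a)\|_\infty^p+\sum_{b\in B\setminus\alpha(A)}\|b-\beta(b)\|_\infty^p=\sum_{b\in B}\|b-\beta(b)\|_\infty^p$, into the cost of $\beta$. This yields $D_p^f(A||C)\le(\text{cost of }\alpha)^{1/p}+(\text{cost of }\beta)^{1/p}$ for all $\alpha,\beta$, and taking infima over $\alpha$ and $\beta$ separately gives (iv).

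The one delicate step --- and the only place the conditions \eqref{eq:conditions} are really needed --- is the penalty split $f(\|\beta(b)-\lambda(\beta(b))\|_\infty)\le f(\|b-\lambda(b)\|_\infty)+\|b-\beta(b)\|_\infty$. The route I would take is: the distance-to-diagonal map $z\mapsto\|z-\lambda(z)\|_\infty$ equals $\inf_t\|z-(t,t)\|_\infty$ and is therefore $1$-Lipschitz, so $\|\beta(b)-\lambda(\beta(b))\|_\infty\le\|b-\lambda(b)\|_\infty+\|b-\beta(b)\|_\infty$; then sub-additivity and sub-diagonality give $f(u+v)\le f(u)+f(v)\le f(u)+v$. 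Composing these two bounds uses that $f$ is non-decreasing, so I would add monotonicity to the hypotheses on $f$ in Definition~\ref{flass} (it holds for the intended examples, e.g.\ $f=\tanh(x/2)$); sub-diagonal plus sub-additive does not by itself imply monotonicity, and the triangle inequality can fail without it. The remaining extended-real bookkeeping is harmless: an off-diagonal point of $A$ can be sent only to a finite point of $B$ or, at infinite cost, to the diagonal, so $D_p^f(A||B)=\infty$ as soon as $B$ has fewer infinite points than $A$, and every estimate above is trivially true in the $\infty$ cases.
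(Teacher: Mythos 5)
Your treatment of (i)--(iii) matches the paper's short arguments, and your proof of (iv) is correct but follows a genuinely different route. The paper fixes injections $\gamma:A\to B$ and $\delta:B\to C$ realizing the two divergences, adjoins auxiliary multisets $A'$, $B'$ of ``phantom'' points chosen so that each unmatched point's penalty becomes an honest matching cost, extends $\gamma,\delta$ to bijections $\gamma':A\cup A'\to B\cup B'$, $\delta':B\cup B'\to C$, and applies Minkowski over the single index set $A\cup A'$, finishing with a two-case analysis of the points of $A'$. You instead take arbitrary injections $\alpha,\beta$, use the composite $\beta\circ\alpha$ directly, decompose $C\setminus\beta\alpha(A)=(C\setminus\beta(B))\sqcup\beta(B\setminus\alpha(A))$, dominate each contribution by $y_i+z_i$, and let Minkowski reassemble the $y$'s into the cost of $\alpha$ and the $z$'s into the cost of $\beta$ before taking infima. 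The estimates are at bottom the same, but your version avoids the phantom-point bookkeeping and also sidesteps the attainment of the infimum, which the paper uses without justification both in (ii) and in (iv) (``let $\gamma$ realize\dots''); you either argue attainment explicitly (for (ii)) or do not need it (for (iv)).

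Your flag about monotonicity is a genuine catch concerning the hypotheses, not a defect of your proof: the paper's Case 1 chain $f(||b-\lambda(b)||_\infty)+||b-\delta'(b)||_\infty\geq f(||\delta'(b)-\lambda(b)||_\infty)\geq f(||\delta'(b)-\lambda(\delta'(b))||_\infty)$, attributed to properties i) and ii) of Definition~\ref{flass}, in fact also needs $f$ non-decreasing (one bounds the argument via the $1$-Lipschitz distance-to-diagonal and must then pass the bound through $f$), exactly as in your penalty split. Sub-diagonality plus sub-additivity do not imply monotonicity, and (iv) can actually fail without it: take $f(x)=x$ on $[0,1]$, $f(x)=3-2x$ on $[1,3/2]$, $f\equiv 0$ on $[3/2,\infty]$, which is continuous, sub-diagonal and sub-additive; let $A$ be the diagonal-only diagram, $B$ have the single off-diagonal point $b=(-1/2,5/2)$ (distance $3/2$ to the diagonal) and $C$ the single off-diagonal point $c=(0,2)$ (distance $1$), so $||b-c||_\infty=1/2$. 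Then $D_p^{f}(A||B)=f(3/2)=0$, $D_p^{f}(B||C)=1/2$, but $D_p^{f}(A||C)=f(1)=1>1/2$, violating (iv) for every $p$. So adding monotonicity to Definition~\ref{flass} (satisfied by the paper's examples, e.g.\ the shifted logistic, and already assumed in the stability theorem) is the right repair, and the paper's statement needs it as much as your argument does.
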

Properties (i) and (ii) imply that $D_p^f$ is indeed a divergence function, according
to the standard definition in information theory~\cite{cover1999elements}. Property (iii) means that $D_p^f$ 
is always bounded from above by the corresponding $p$-Wasserstein distance. Property (iv) is the triangle 
inequality for $D_p^f$.
\begin{proof}
    \begin{enumerate}[label=(\roman*)]
        \item This is clear from the definition.
        \item If $A=B$, the identity function 
        realizes the infimum on the right hand side of \eqref{eq:def_div}, and thus $D_p^{f}(A||B)=0$.
        If $D_p^{f}(A||B)=0$ and $\gamma:A\f B$ is an injection that realizes $D_p^{f}(A||B)$, 
        by the hypothesis on $f$ we have that all the elements in the second summand in~\eqref{eq:def_div} have to be zero.
        Therefore, $\gamma$ has to be a bijection. Then, since $||\cdot||_\infty$ is a norm, the only way to have $D_p^{f}(A||B)=0$ is that 
        $\gamma$ is the identity function, implying that $A=B$.
        \item This follows from the fact that the set of bijections between $A$ and $B$ is a subset of the injections from $A$ to $B$ and 
        of those from $B$ to $A$.
        \item Let $\gamma:A\f B$ and $\delta: B\f C$ be injections that realize $D_p^{f}(A||B)$ and $D_p^{f}(B||C)$ respectively. 
        Intuitively, the trick is to add extra points in order to change injections for bijections and thus write the divergence in terms of only one sum with the same value. 
         More precisely, for an element $c\in C\setminus\delta(B)$, 
        let $b_c'\in \R^2$ be a point such that $||b_c'-c||_\infty=f(||c-\lambda(c)||_\infty)$ and denote by $B'$ the multiset $\{b'_c|c\in C\setminus\delta(B)\}$. Analogously, for $b\in (B\setminus\gamma(A))\cup B'$, let $a'_b\in \R^2$ be such that $||a'_b-b||_\infty=f(||b-\lambda (b)||_\infty)$ and denote by $A'$ the multiset $\{a'_b|b\in (B\setminus\gamma(A))\cup B'\}$. 
        Now let $\gamma':A\cup A'\f B\cup B'$ and $\delta':B\cup B'\f C$ be extensions of $\gamma$ and $\delta$ respectively such that $\gamma'(a'_b)=b$ and $\delta'(b'_c)=c$ (See Figure \ref{fig:tr_ineq}). 
        \begin{figure}[h!]
            \centering
            \begin{subfigure}[b]{0.44\textwidth}
            \centering
            \includegraphics[width=\textwidth]{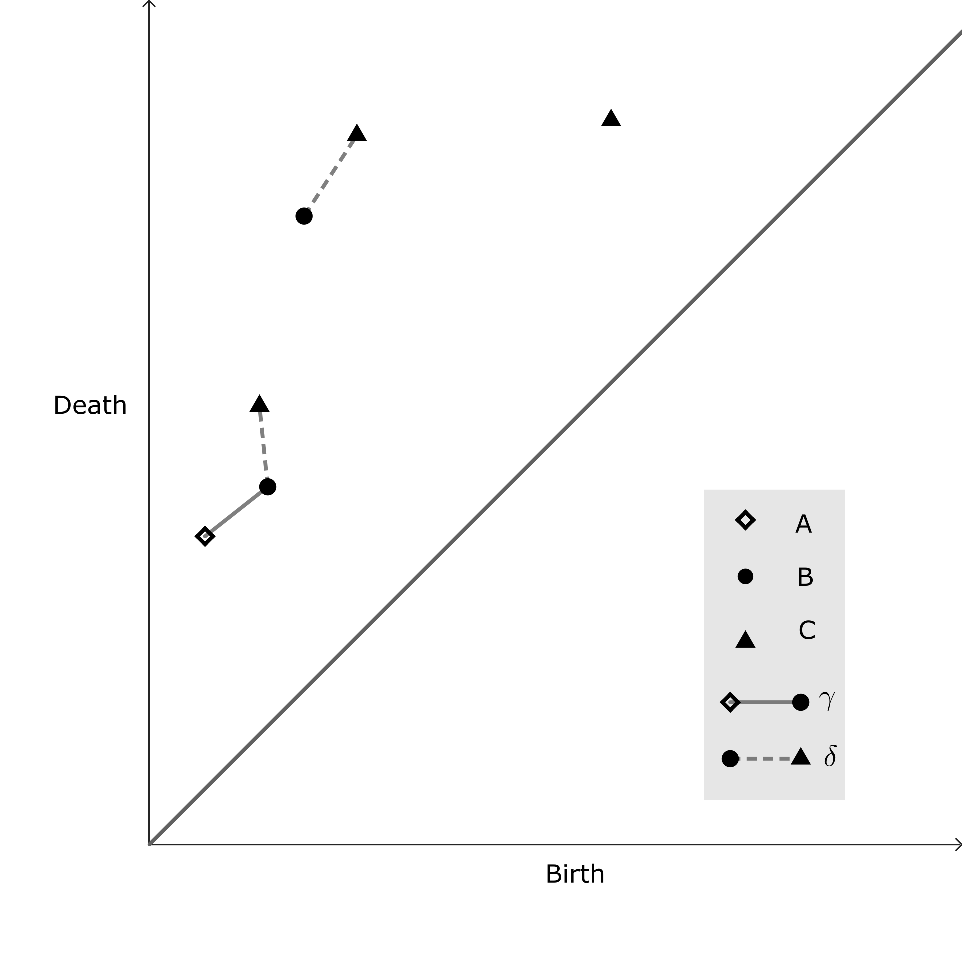}
            \end{subfigure}
             \hfill
            \begin{subfigure}[b]{0.44\textwidth}
            \centering
            \includegraphics[width=\textwidth]{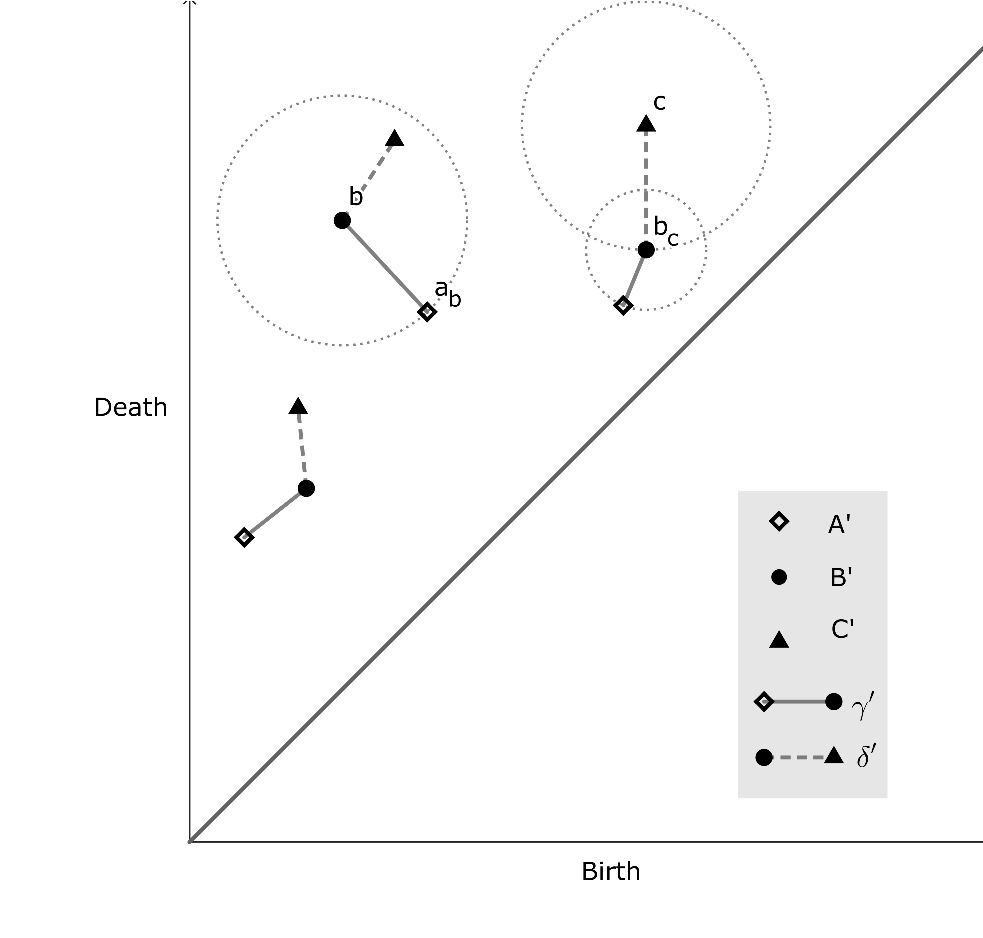}
            \end{subfigure}
           \caption{In the left panel we have the persistence diagrams $A$, $B$, and $C$. 
           The realizing injection of the divergence is represented by joining the points. In the right panel extra
           points are added in order to transform the injection into a bijection as described in the text.  }
           \label{fig:tr_ineq}
        \end{figure}
        
        It follows that 
        $$D_p^{f}(A||B)=\left(\sum_{a\in A\cup A'}||a-\gamma'(a)||_\infty^p\right)^{1/p}$$
        and
        $$D_p^{f}(B||C)=\left(\sum_{b\in B\cup B'}||b-\delta'(b)||_\infty^p\right)^{1/p}.$$ 
        Since $\gamma'$ is a bijection (of multisets), we can rewrite the index in the previous equality as
        $$D_p^{f}(B||C)=\left(\sum_{a\in A\cup A'}||\gamma'(a)-\delta'(\gamma'(a))||_\infty^p\right)^{1/p}.$$
        Then, by the Minkowski inequality, we have
        \begin{multline}\label{eq:sum_divergences}
         D_p^{f}(A||B)+D_p^{f}(B||C)\geq \\
         \left(\sum_{a\in A\cup A'}(||a-\gamma'(a)||_\infty+||\gamma'(a)-\delta'(\gamma'(a))||_\infty)^p\right)^{1/p}.
        \end{multline}
        Since $A\cup A'$ is a disjoint union, we can split the sum on the right in this inequality 
        into a sum indexed by $A$ and a sum indexed by $A'$. Regarding the sum indexed by $A$, by the triangle inequality we have
        \begin{equation}\label{eq:sum_a}
        \sum_{a\in A}(||a-\gamma'(a)||_\infty+||\gamma'(a)-\delta'(\gamma'(a))||_\infty)^p\geq\\
        \sum_{a\in A}||a-\delta'(\gamma'(a))||_\infty^p.
        \end{equation}
        Regarding the sum indexed by $A'$, let us consider $a'\in A'$ and  $b=\gamma'(a')$ and
        take into account two cases.\\
        \noindent \textit{Case 1: $b\in B\setminus \gamma(A)$}. 
        In this case, by definition of the elements in $A'$, we have
        $$||a'-\gamma'(a')||_\infty+||\gamma'(a')-\delta'(\gamma'(a'))||_\infty=f(||b-\lambda (b)||_\infty)+||b-\delta'(b)||_\infty.$$
        By the properties i) and ii) of $f$ in Definition~\ref{flass},
        $$f(||b-\lambda (b)||_\infty)+||b-\delta'(b)||_\infty\geq f(||\delta'(b)-\lambda (b)||_\infty\geq f(||\delta'(b)-\lambda (\delta'(b))||_\infty).$$
        \noindent \textit{Case 2: $b\in B'$}. 
        In this case, again by the definition of the elements in $A'$ and $B'$, we have
        \begin{multline*}
            ||a'-\gamma'(a')||_\infty+||\gamma'(a')-\delta'(\gamma'(a'))||_\infty=
                        f(||b-\lambda (b)||_\infty)+\\f(||\delta'(b)-\lambda(\delta'(b))||_\infty)\geq f(||\delta'(b)-\lambda(\delta'(b))||_\infty).
        \end{multline*}
        By the two previous cases, it follows that
        \begin{multline}\label{eq:sum_a'}    
        \sum_{a'\in A'}(||a'-\gamma'(a')||_\infty+||\gamma'(a')-\delta'(\gamma'(a'))||_\infty)^p\geq\\
        \sum_{a'\in A'}f(||\delta'(\gamma'(a'))-\lambda(\delta'(\gamma'(a')))||_\infty)^p
        \end{multline}
        Substituting \eqref{eq:sum_a} and \eqref{eq:sum_a'} into \eqref{eq:sum_divergences} and noting that 
        $C\setminus \delta\circ\gamma(A)=\delta'(\gamma'(A'))$, we obtain
        \begin{multline*}
            D_p^{f}(A||B)+D_p^{f}(B||C)\geq\\ \left(\sum_{ A}||a-\delta'(\gamma'(a))||_\infty^p+\sum_{C\setminus \delta\circ\gamma(A)}f(||c-\lambda(c)||_\infty)^p\right)^{1/p}.
        \end{multline*}
        Since $\delta\circ\gamma$ is an injection from $A$ into $C$ and $D_p^{f}(A||C)$ minimizes 
        the sum on the right in the above inequality over all such injections, the result follows.

    \end{enumerate}
\end{proof}

In the next theorem we show that a particular ``symmetrization'' of the $(f,p)$-Wasserstein divergences 
is bounded both from above and from below by (multiples of) the corresponding $p$-Wasserstein distance. 
In the case of the 
bottleneck distance, this implies that $d_p$ can be completely recovered from knowledge of both $p$-divergences.

\begin{theorem}
    Given any $A, B\in \D$ and $1\leq p<\infty$, 
    \begin{equation}\label{eq:bounds}
        d_p(A, B)\leq (D_p^{f}(A || B)^p+ D_p^{f}(B||A)^p)^{1/p}\leq 2^{1/p}d_p(A,B).
    \end{equation}
    In particular, in the limit as $p$ tends to infinity, 
    $$\max\{D_\infty(A || B), D_\infty(B||A)\}=d_\infty(A, B).$$ 
\end{theorem}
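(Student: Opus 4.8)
The plan is to establish the two inequalities in~\eqref{eq:bounds} separately and then deduce the bottleneck identity by letting $p\to\infty$. The right-hand inequality is immediate from Theorem~\ref{thm:properties_div}(iii): since both $D_p^{f}(A||B)$ and $D_p^{f}(B||A)$ are $\le d_p(A,B)$, we get $D_p^{f}(A||B)^p+D_p^{f}(B||A)^p\le 2\,d_p(A,B)^p$, and taking $p$-th roots gives $\big(D_p^{f}(A||B)^p+D_p^{f}(B||A)^p\big)^{1/p}\le 2^{1/p}d_p(A,B)$.

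For the left-hand inequality I would first dispose of the degenerate case: if $A$ and $B$ have different numbers of infinite points, then an injection cannot send an infinite point to a finite one at finite cost, so one of $D_p^{f}(A||B)$, $D_p^{f}(B||A)$ is $+\infty$ and there is nothing to prove. So assume both divergences are finite, and let $\gamma:A\to B$ and $\delta:B\to A$ be injections realizing them (such injections exist for persistence diagrams; otherwise one replaces them by near-minimizers and lets the error tend to $0$). Viewing $A$ and $B$ as multisets, with the diagonal counted with its infinite multiplicity, the Schr\"oder--Bernstein theorem applied to $\gamma$ and $\delta$ yields a partition $A=A_1\sqcup A_2$ such that $\gamma$ restricts to a bijection $A_1\to\gamma(A_1)$ and $\delta$ restricts to a bijection $B\setminus\gamma(A_1)\to A_2$; hence the map $\eta$ equal to $\gamma$ on $A_1$ and to $\delta^{-1}$ on $A_2$ is a bijection $A\to B$.

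Since $d_p(A,B)$ is an infimum over all bijections $A\to B$, and substituting $b=\delta^{-1}(a)$ in the part of the sum indexed by $A_2$,
$$d_p(A,B)^p\ \le\ \sum_{a\in A}||a-\eta(a)||_\infty^p\ =\ \sum_{a\in A_1}||a-\gamma(a)||_\infty^p\ +\ \sum_{b\in B\setminus\gamma(A_1)}||b-\delta(b)||_\infty^p.$$
Enlarging each index set to all of $A$, respectively all of $B$, and then discarding the nonnegative $f$-penalty terms appearing in~\eqref{eq:def_div}, the first sum is $\le\sum_{a\in A}||a-\gamma(a)||_\infty^p\le D_p^{f}(A||B)^p$ and the second is $\le\sum_{b\in B}||b-\delta(b)||_\infty^p\le D_p^{f}(B||A)^p$. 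Therefore $d_p(A,B)^p\le D_p^{f}(A||B)^p+D_p^{f}(B||A)^p$, which is the left-hand inequality of~\eqref{eq:bounds}.

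Finally, to obtain the bottleneck statement I would let $p\to\infty$ in~\eqref{eq:bounds}. Since $A$ and $B$ have only finitely many off-diagonal points, the relevant cost vectors are finite-dimensional, so by the standard convergence of $\ell^p$-norms to the $\ell^\infty$-norm one has $d_p(A,B)\to d_\infty(A,B)$ as well as $D_p^{f}(A||B)\to D_\infty^{f}(A||B)$ and $D_p^{f}(B||A)\to D_\infty^{f}(B||A)$; moreover $2^{1/p}\to1$, and if $x_p\to x$, $y_p\to y$ then $(x_p^p+y_p^p)^{1/p}\to\max\{x,y\}$ (squeeze it between $\max\{x_p,y_p\}$ and $2^{1/p}\max\{x_p,y_p\}$). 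Plugging these limits into~\eqref{eq:bounds} traps $\max\{D_\infty^{f}(A||B),D_\infty^{f}(B||A)\}$ between $d_\infty(A,B)$ and $d_\infty(A,B)$, giving equality. The argument is largely routine; the points deserving care are checking that the Schr\"oder--Bernstein bijection $\eta$ is genuinely a bijection of persistence diagrams compatible with the infinite multiplicity of the diagonal (and that $\sum_a||a-\eta(a)||_\infty^p$ has only finitely many nonzero terms, which holds because $\gamma$ and $\delta$ have finite cost), and justifying the limits $d_p\to d_\infty$, $D_p^{f}\to D_\infty^{f}$, including the case of unequal numbers of infinite points, where both sides of the identity are $+\infty$.
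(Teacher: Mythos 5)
Your proof is correct and follows essentially the same route as the paper: the right inequality from Theorem~\ref{thm:properties_div}(iii), and the left inequality by gluing the two realizing injections into a bijection via Cantor--Schr\"oder--Bernstein, bounding $d_p$ by that bijection's cost and discarding the nonnegative $f$-penalty terms. Your additional handling of the degenerate case of unequal numbers of infinite points and the explicit $p\to\infty$ limit for the bottleneck identity are sound refinements of details the paper leaves implicit.
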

\begin{proof}
Take $1\leq p<\infty$ and consider injections $\gamma:A\f B$ and $\eta:B\f A$ that realize $D_p^{f}(A||B)$ and $D_p^{f}(B||A)$ respectively. 
From the Cantor–Schröder–Bernstein Theorem, these induce a bijection $\nu: A\f B$, which can be constructed as~\cite{csb_teo}, 
$$\nu(a)=
\begin{cases}
    \gamma(a),\quad\quad \mbox{if $a\in C$}\\
    \eta^{-1}(a), \quad \mbox{if $a\in A\setminus C$}
    \end{cases}$$
where $C_1=A\setminus \eta(B)$,  $C_{k+1}=\eta(\gamma(C_k))$ and $C=\cup C_k$. On the one hand, as $d_p$ takes the minimum over all the bijections, $d_p(A, B)\leq (\sum_{a\in A}||a-\nu(a)||_\infty^p)^{1/p}$. Since  $\sum_{a\in A}||a-\nu(a)||_\infty^p\leq \sum_{a\in A}||a-\gamma(a)||_\infty^p+\sum_{b\in B}||b-\eta(b)||_\infty^p$ 
and $\gamma$ and $\eta$ realize $D_p^{f}(A||B)$ and $D_p^{f}(B||A)$ respectively, we have that
 $$d_p(A, B)\leq (D_p^{f}(A||B)^p+ D_p^{f}(B||A)^p)^{1/p}\,,$$
 thus proving the first inequality in~\eqref{eq:bounds}.
On the other hand, from Theorem \ref{thm:properties_div}, 
we have that $D_p^{f}(A||B)^p+D_p^{f}(B||A)^p\leq 2d_p(A, B)^p$, which  
is the second inequality in~\eqref{eq:bounds}. 

\end{proof}

\begin{remark}\rm
Since for the trivial case $A=B$ all the terms in the inequalities in~\eqref{eq:bounds}  vanish, the inequalities are tight for
any function $f$ which is sub-diagonal and sub-additive (Definition~\ref{flass}). 
Non-trivial cases where the equality holds are the following:
the right inequality becomes an equality for given $A$ and $B$ in $\D$ whenever
the minimizing injection happens to be a bijection. 
On the other hand, there exist persistence diagrams that turn the left inequality  
into an equality when the function $f$ is identically zero,
 as we show in the next example. 
\end{remark}

\begin{example}\label{ex:Ex3}\rm
    Consider the diagrams in Figure \ref{fig:pd}. 
       \begin{figure}[h!]
    \centering
    \includegraphics[scale=0.43]{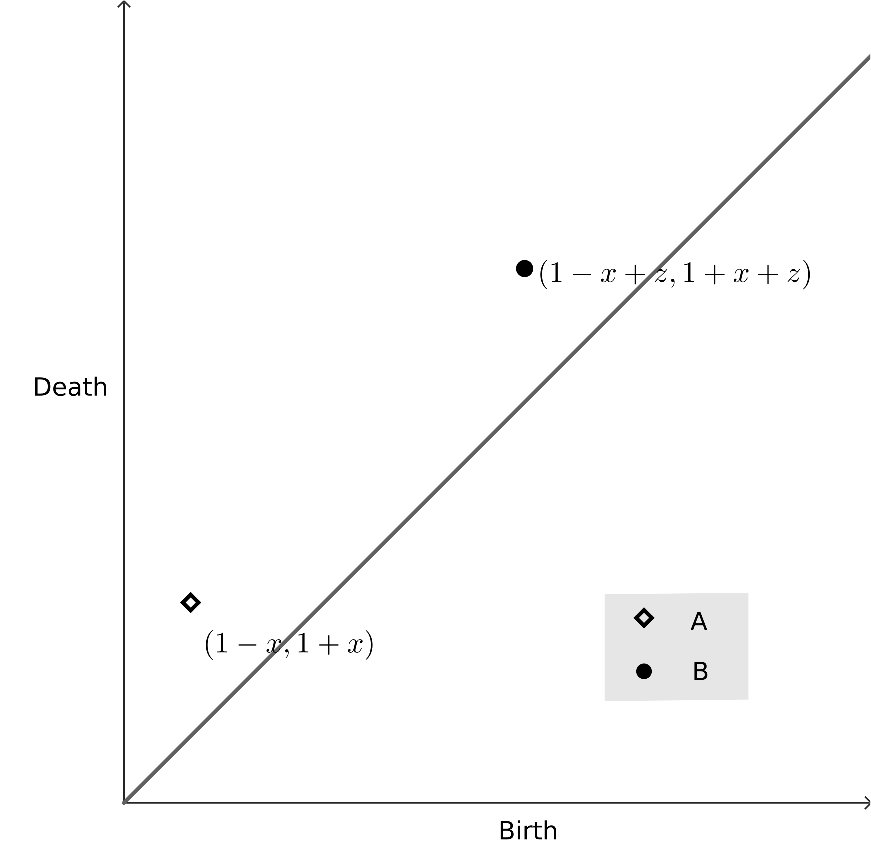}
    \caption{Persistence diagrams $A$ and $B$ of Example~\ref{ex:Ex3}.}
    \label{fig:pd}
\end{figure}
    $A$ is the persistence diagram with only one point not in the diagonal, $a=(1-x, 1+x)$ with $0<x<1$, 
    and $B$ is a translation of $A$ in the direction of $(z, z)$, $z>0$, that is, the only point of $B$ not in the diagonal 
    is $b=(1-x+z, 1+x+z)$.
We have two feasible injections $\gamma:A\hookrightarrow B$, $\gamma(a)=b$ or $\gamma(a)=(1, 1)$, 
this last one corresponding to the projection onto the diagonal. It can be seen that $||a-b||_\infty=z$ and $||a-(1,1)||_\infty=x$. 
If we consider $z> \sqrt[p]{2}x$, the second option is the minimizing injection and then $D_p^{f}(A||B)=x$. 
Analogously we can see that $D_p^{f}(B||A)=x$. 
To compute $d_p(A, B)$, we have two feasible bijections $\nu:A\f B$, the one induced by 
$\gamma$ and $\eta$, for which $(\sum_{a\in A}||a-\eta(a)||_\infty^p)^{1/p}=\sqrt[p]{2}x$, 
and the one where $\eta(a)=b$, for which $(\sum_{a\in A}||a-\eta(a)||_\infty^p)^{1/p}=z$. Since $z>\sqrt[p]{2}x$, 
the first bijection minimizes the sum and $d_p(A, B)=\sqrt[p]{2}x$. This shows that the first inequality is tight. 
\end{example}


In the next section we will prove that the $(f,p)$-Wasserstein divergence induces a topology over $\D$.
Then we will use this topology in order to prove a stability result for vectorizations defined directly on $\D$.

\section{Topology over $\D$ and stability of vectorizations}\label{sec:stat}
In order to apply statistics or machine learning methods over the set of persistence diagrams, $\D$ is usually mapped to normed spaces. 
Examples of these mappings are persistence images \cite{pers_images}, landscapes \cite{pers_landscapes}, entropy \cite{rucco2014entropy}, 
or simple statistics over the lengths of the bars, such as the mean or the standard deviation.
Commonly these definitions are well defined only if all the bars have finite length. 
Thus usually infinite bars are cut to obtain finite ones. However,
there is no systematic way to perform this procedure. 
We propose to cut the infinite bars so that the $(f,p)$-Wasserstein divergence is minimized
and we show in the next result that this procedure is well defined and coincides precisely with removing the bars of infinite length.

\begin{theorem}
    Let $\D_F$ be the set of persistence diagrams such that all points outside the diagonal have finite coordinates. Given $A\in \D$,
    there exists a unique $B\in \D_F$ such that $D_p^{f}(B|| A)=\inf\{D_p^{f}(C|| A)|C\in \D_F\}$. 
    Moreover, $B$ is the subset of $A$ obtained by removing all the points with infinite coordinates. 
\end{theorem}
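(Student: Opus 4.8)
The plan is to fix $A\in\D$ and split it as $A=A_{\mathrm{fin}}\cup A_\infty\cup\Delta$, where $A_{\mathrm{fin}}$ is the finite multiset of off-diagonal points of $A$ with finite coordinates, $A_\infty$ is the finite multiset of its infinite points, and $\Delta$ is the diagonal with infinite multiplicity. The natural candidate for $B$ is $B_0:=A_{\mathrm{fin}}\cup\Delta$, that is, $A$ with all its infinite points removed, which clearly lies in $\D_F$. A preliminary observation that makes the infinite points easy to handle: if $a=(x,\infty)$ is an infinite point then $\lambda(a)=(\infty,\infty)$, so by the conventions of Definition~\ref{def:divergence} we have $\|a-\lambda(a)\|_\infty=\infty$ and hence $f(\|a-\lambda(a)\|_\infty)=f(\infty)$, a finite nonnegative number since $f$ takes values in $[0,\infty)$.

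First I would prove the lower bound $D_p^{f}(C\|A)^p\ge |A_\infty|\,f(\infty)^p$ for every $C\in\D_F$, where $|A_\infty|$ is the number of infinite points of $A$. Let $\gamma:C\to A$ be any injection. Since every point of $C$ has finite coordinates while every point of $A_\infty$ has an infinite coordinate, if $\gamma$ sends some point of $C$ into $A_\infty$ then the corresponding term of the first sum in~\eqref{eq:def_div} is infinite and the inequality is trivial; otherwise $A_\infty\subseteq A\setminus\gamma(C)$, so the second sum is at least $\sum_{a\in A_\infty}f(\|a-\lambda(a)\|_\infty)^p=|A_\infty|f(\infty)^p$. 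Taking the infimum over $\gamma$ proves the bound. Then I would check that the inclusion $B_0\hookrightarrow A$ realizes equality: its first sum vanishes and $A\setminus B_0=A_\infty$, so $D_p^{f}(B_0\|A)^p\le|A_\infty|f(\infty)^p$. Hence the infimum equals $|A_\infty|f(\infty)^p$ and $B_0$ attains it. (Existence of a minimizing injection for a fixed pair of diagrams is the standard fact for Wasserstein-type matchings, since each diagram has only finitely many off-diagonal points and the diagonal merely acts as a sink.)

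For uniqueness, suppose $B\in\D_F$ also attains the infimum and let $\gamma:B\to A$ be a minimizing injection. Since $D_p^{f}(B\|A)$ is finite, $\gamma$ cannot send any point of $B$ into $A_\infty$, so $A_\infty\subseteq A\setminus\gamma(B)$ and, as in the lower bound, the second sum is already $\ge|A_\infty|f(\infty)^p$; since the total equals $|A_\infty|f(\infty)^p$, the first sum must be $0$ and the second sum must equal $|A_\infty|f(\infty)^p$ exactly. The vanishing of the first sum forces $\gamma(b)=b$ for every $b\in B$ (using that $\|\cdot\|_\infty$ is a norm on $\R^2$ and that $b$ and $\gamma(b)$ are finite), so $B\subseteq A$ as multisets, and being in $\D_F$ forces $B\subseteq A_{\mathrm{fin}}\cup\Delta=B_0$. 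On the other hand $A\setminus\gamma(B)=A_\infty\cup(A_{\mathrm{fin}}\setminus B)$, so equality of the second sum with $|A_\infty|f(\infty)^p$ gives $\sum_{a\in A_{\mathrm{fin}}\setminus B}f(\|a-\lambda(a)\|_\infty)^p=0$; assuming $f(x)>0$ for all $x>0$ — the same hypothesis used in Theorem~\ref{thm:properties_div}(ii) — and noting that every $a\in A_{\mathrm{fin}}$ lies strictly off the diagonal, so $\|a-\lambda(a)\|_\infty>0$, this forces $A_{\mathrm{fin}}\setminus B=\emptyset$, i.e.\ $A_{\mathrm{fin}}\subseteq B$. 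Together with $\Delta\subseteq B$ and $B\subseteq B_0$ we conclude $B=B_0$.

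The main obstacle is the uniqueness step, and in particular recognizing that it requires the extra assumption $f(x)>0$ for $x>0$: without it uniqueness genuinely fails (e.g.\ when $f\equiv0$ the functional $D_p^{f}(\,\cdot\,\|A)$ vanishes on every sub-multiset $C\subseteq A$ lying in $\D_F$, so $\Delta$ and $B_0$ are both minimizers whenever $A$ has a finite off-diagonal point), so this hypothesis should be stated explicitly, exactly as in Theorem~\ref{thm:properties_div}(ii). The remaining work — the arithmetic conventions on $\bar{\R}$ when projecting an infinite point to the diagonal, and the clean splitting of the total cost into the unavoidable contribution $|A_\infty|f(\infty)^p$ of the infinite points plus a nonnegative remainder — is routine.
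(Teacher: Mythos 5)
Your proposal is correct, and it is in fact more careful than the paper's own argument. The paper's proof follows the same basic idea --- observe that no finite point of $C\in\D_F$ can be matched to an infinite point of $A$ at finite cost, so the infinite points of $A$ always land in the second sum of \eqref{eq:def_div} and contribute a fixed amount, after which taking $B$ to be the finite part of $A$ makes the first sum vanish --- but it stops at this informal optimality argument and never addresses uniqueness. Your version adds two things: the explicit lower bound $D_p^{f}(C\|A)^p\geq |A_\infty|\,f(\infty)^p$ with attainment by $B_0$, and a genuine uniqueness argument, which correctly isolates the extra hypothesis $f(x)>0$ for $x>0$. That hypothesis is not stated in the theorem, and your counterexample is valid: the paper's Definition~\ref{flass} admits $f\equiv 0$ (the paper even uses $f\equiv 0$ in the remark preceding Example~\ref{ex:Ex3}), and for such $f$ the diagonal-only diagram and $B_0$ are both minimizers of $D_p^{f}(\,\cdot\,\|A)$ whenever $A$ has a finite off-diagonal point, so uniqueness as stated fails. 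In short, your route refines the paper's: same decomposition of $A$ into finite part, infinite part and diagonal, same identification of the unavoidable cost of the infinite points, but with a quantified minimum value and a uniqueness step that exposes a missing positivity assumption the paper should have imposed (as it does in Theorem~\ref{thm:properties_div}(ii)).
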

\begin{proof}
    Let $B\in \D_F$ and $\gamma:B\f A$ be an injection that realizes $D_p^{f}(B||A)$, that is
    \begin{equation}\label{eq:min_div}
       D_p^{f}(B||A)=\left(\sum_{b\in B}||b-\gamma(b)||_\infty^p+\sum_{a\in A\setminus \gamma(B)}f(||a-\lambda(a)||_\infty)^p\right)^{1/p}. 
    \end{equation}
    If the infinite points of $A$ are in the image of $\gamma$, 
    then the first sum in \eqref{eq:min_div} is infinite. Therefore, since $\gamma$ is minimizing, we can safely assume that this is not the case,
    and thus all the infinite points of $A$ contribute to the second sum, independently of the choice of $B\in\D_F$. Now the best choice of $B$ in order
    to minimize at the same time both the first and the second sum is to choose $B$ as the subset of $A$ given by taking only the finite points of $A$. 
    Indeed, in this way $\gamma$ can be taken as the identity and the first sum vanishes, while the second sum runs only over the infinite points of 
    $A$, which is the least we can do, by the previous argument.
\end{proof}

\begin{definition}\label{def:projection}\rm
    Given $A\in \D$, let $\bar A\in \D_F$ be the persistence diagram that minimizes $D_p^{f}(-||A)$. 
    We call $\bar{A}$ \emph{the projection of $A$ onto $\D_F$} and denote by $\pi_F:\D\f \D_F$ the function  that takes $A$ onto $\bar{A}$.
\end{definition}

The next lemma shows that $\pi_F$ is a contraction, both with respect
to $D_p^f$ and with respect to $d_p$.

\begin{lemma}\label{lemma:projection_is_contraction}
    Given $A, B\in \D$ and for any $1\leq p\leq \infty$, 
    $$D_p^{f}(\pi_F(A)|| \pi_F(B))\leq D_p^{f}(A||B)$$ and 
    $$d_p(\pi_F(A),  \pi_F(B))\leq d_p(A, B).$$
\end{lemma}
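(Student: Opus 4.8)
\emph{Plan.} The idea is that any matching realizing $D_p^{f}(A\,||\,B)$ (resp.\ $d_p(A,B)$) already ``contains'' a matching between the finite parts $\pi_F(A)$ and $\pi_F(B)$ of no greater cost, obtained by simply forgetting the points at infinity.

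First I would reduce to the nontrivial case: if $D_p^{f}(A\,||\,B)=\infty$ the claimed inequality is immediate, so assume it is finite and let $\gamma:A\f B$ be an injection realizing it, as in the proof of Theorem~\ref{thm:properties_div}. Write $A=\pi_F(A)\sqcup A_\infty$ and $B=\pi_F(B)\sqcup B_\infty$, where $A_\infty$ and $B_\infty$ are the finite multisets of points of $A$ and $B$ having an infinite coordinate; by Definition~\ref{def:projection} and the preceding theorem this is exactly the splitting into the part kept and the part deleted by $\pi_F$. The key point is that finiteness of the cost of $\gamma$ forces $\gamma(A_\infty)\subseteq B_\infty$ and $\gamma(\pi_F(A))\subseteq \pi_F(B)$: by the convention that $|y_1-y_2|$ vanishes only when \emph{both} $y_1,y_2$ are infinite, one has $||a-\gamma(a)||_\infty=\infty$ whenever exactly one coordinate of the pair $a,\gamma(a)$ is infinite, so such a pairing would make the first sum in~\eqref{eq:def_div} infinite. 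In particular $|A_\infty|\le |B_\infty|$.

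Next I would restrict. Put $\gamma'=\gamma|_{\pi_F(A)}:\pi_F(A)\f\pi_F(B)$, a well-defined injection by the previous step. Since $\gamma(A)=\gamma(\pi_F(A))\sqcup\gamma(A_\infty)$ with $\gamma(\pi_F(A))\subseteq\pi_F(B)$ and $\gamma(A_\infty)\subseteq B_\infty$, we get the multiset identity $B\setminus\gamma(A)=\big(\pi_F(B)\setminus\gamma'(\pi_F(A))\big)\sqcup\big(B_\infty\setminus\gamma(A_\infty)\big)$, so the second sum in~\eqref{eq:def_div} evaluated at $\gamma$ dominates the one evaluated at $\gamma'$; and the first sum at $\gamma$ is the first sum at $\gamma'$ plus the nonnegative terms indexed by $A_\infty$. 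Hence $\mathrm{cost}(\gamma')\le\mathrm{cost}(\gamma)=D_p^{f}(A\,||\,B)^p$, and taking the infimum over injections $\pi_F(A)\f\pi_F(B)$ gives $D_p^{f}(\pi_F(A)\,||\,\pi_F(B))\le D_p^{f}(A\,||\,B)$. The inequality for $d_p$ is proved identically: when $d_p(A,B)<\infty$ a realizing \emph{bijection} must restrict to a bijection $\pi_F(A)\f\pi_F(B)$ and a bijection $A_\infty\f B_\infty$ by the same finiteness argument, and discarding the $A_\infty$-indexed terms (there is now no second sum) yields the bound. The case $p=\infty$ is the same with $\sup$ replacing $\sum$.

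I do not expect a real obstacle here: the whole content is ``an optimal matching respects the finite/infinite split'' together with monotonicity of sums of nonnegative terms. The only minor points needing care are the multiset bookkeeping around the diagonal --- unmatched diagonal points contribute $f(0)^p=0$ because $0\le f(0)\le 0$, so they are harmless --- and the existence of a realizing injection/bijection, which is legitimate because for diagrams with finitely many off-diagonal points the optimization is effectively over a finite set of pairings, as is tacitly used elsewhere in the paper.
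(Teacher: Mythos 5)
Your proposal is correct and follows essentially the same route as the paper: restrict the realizing injection (resp.\ bijection) to the finite parts, noting that finite cost forces the matching to respect the finite/infinite split, and drop the nonnegative leftover terms. You simply spell out the bookkeeping (and fold the paper's ``different number of infinite points'' case into the trivial $d_p=\infty$ case) that the paper's terse proof leaves implicit.
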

\begin{proof}
For the first inequality, take an injection that realizes $D_p^{f}(A||B)$. 
This induces an injection between $\pi_F(A)$ and $\pi_F(B)$, which implies the result. 

For the second inequality  we divide the proof into two cases.\\
\textit{Case 1: $A$ and $B$ have a different number of infinite points.} 
In this case, $d_p(A, B)$ is infinite while $d_p(\pi_F(A), \pi_F(B))$ is finite.\\
\textit{Case 2: $A$ and $B$ have the same number of infinite points.} 
Let $\gamma$ be a bijection between $A$ and $B$ that realizes $d_p(A, B)$. 
This induces a bijection $\gamma'$ between $\pi_F(A)$ and $\pi_F(B)$ for which
$$\sum_{a\in A}||a-\gamma(a)||_\infty\geq \sum_{a\in \pi_F(A)}||a-\gamma'(a)||_\infty$$
and
$$\sup_{a\in A}\{||a-\gamma(a)||_\infty\}\geq \sup_{a\in \pi_F}(A)\{||a-\gamma'(a)||_\infty\}$$
since $\pi_F(A)\subset A$.
These inequalities imply $d_p(A, B)\geq d_p(\pi_F(A), \pi_F(B))$ 
for the case $1\leq p<\infty$ and $p=\infty$ respectively.
\end{proof}

Now that we have a well-defined projection of $\D$ onto $\D_F$, 
we are able to extend any map defined on $\D_F$ to all $\D$.
More precisely,
\begin{definition}\rm
   Given any vectorization
   $$g:\D_F\f X$$ 
   with $X$ a normed space, 
we define \emph{the extension of $g$} as the map
$$\tilde{g}:\D\f X$$ 
given by 
$$\tilde{g}=g\circ\pi_F.$$ 
\end{definition}

An important issue to take into account at this step, is to guarantee the stability of the process. 
Usually, stability results come in the form of Lipschitz continuity of the vectorizations $g$, that is, 
showing the existence of a $k\in \R^+$ such that
$$d(g(A), g(B))\leq k\,d_p(A, B)$$
for $A, \, B\in \D_F$, and with $d$ being the metric in $X$ induced by the norm and $d_p$ any of the $p$-Wasserstein distances, $1\leq p \leq \infty$ (see for instance \cite{pers_images,stability_ent,pers_landscapes}). 

If we want $\tilde{g}$ to be a useful vectorization, we need to show the stability of this map. 
An issue in this direction is that $\D$ equipped with the divergence $D_p^f$ is not a metric space, 
and therefore the usual arguments cannot be applied.
Nevertheless, it is possible to endow $\D$ with a topology. 
\begin{lemma}
   Let $$\beta:=\{B(A, r)|A\in \D,\,\, r\in \R^+\}$$ 
   where $B(A, r)=\{C\in \D|D_p^{f}(A||C)< r\}$. 
   Given $U$, $V\in \beta$, for all $X\in U\cap V$, there exists $W\in \beta$ such that $X\in W\subseteq U\cap V$.
\end{lemma}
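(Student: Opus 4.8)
The plan is to verify that $\beta$ satisfies the defining refinement property of a basis for a topology, using the triangle inequality for $D_p^f$ (Theorem~\ref{thm:properties_div}(iv)) together with the fact that $D_p^f(X||X)=0$ for every $X\in\D$, which holds because the identity injection realizes the infimum in~\eqref{eq:def_div}. The only subtlety is that $D_p^f$ is asymmetric: the set $B(X,r)$ consists of the diagrams $C$ with $D_p^f(X||C)<r$, i.e.\ with $X$ in the \emph{first} argument, so the triangle inequality must be invoked with the slots in the matching order.

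First I would record the elementary inclusion underlying everything else: for any $A\in\D$, any $X\in B(A,r)$, and $\varepsilon:=r-D_p^f(A||X)$, which is strictly positive since the inequality defining $B(A,r)$ is strict, one has $B(X,\varepsilon)\subseteq B(A,r)$. Indeed, if $C\in B(X,\varepsilon)$ then $D_p^f(X||C)<\varepsilon$, and by Theorem~\ref{thm:properties_div}(iv),
$$D_p^f(A||C)\le D_p^f(A||X)+D_p^f(X||C)< D_p^f(A||X)+\varepsilon=r,$$
so $C\in B(A,r)$. Moreover $X\in B(X,\varepsilon)$, since $D_p^f(X||X)=0<\varepsilon$.

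Given $U=B(A_1,r_1)$ and $V=B(A_2,r_2)$ in $\beta$ and a point $X\in U\cap V$, I would then apply the previous step twice: put $\varepsilon_i:=r_i-D_p^f(A_i||X)>0$ for $i=1,2$ and let $\varepsilon:=\min\{\varepsilon_1,\varepsilon_2\}>0$. Then $W:=B(X,\varepsilon)\in\beta$, and because $B(X,\varepsilon)\subseteq B(X,\varepsilon_i)\subseteq B(A_i,r_i)$ for each $i$, we obtain $X\in W\subseteq U\cap V$, which is exactly the claimed statement. (For completeness one also notes that $\beta$ covers $\D$, as $A\in B(A,r)$ for every $r>0$, so $\beta$ is genuinely a basis.)

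I do not anticipate any real obstacle: this is the classical ``balls form a basis'' argument, and the only adjustment relative to the metric case is the careful ordering of the arguments of $D_p^f$ so that the asymmetric triangle inequality of Theorem~\ref{thm:properties_div}(iv) and the vanishing $D_p^f(X||X)=0$ are applied in the correct direction. Notably, symmetry of the divergence is never used, so asymmetry causes no difficulty here; it only becomes relevant later when discussing separation or convergence properties of the resulting (in general non-metrizable, possibly non-Hausdorff) topology.
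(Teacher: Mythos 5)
Your proposal is correct and is exactly the argument the paper has in mind: the paper's proof simply says the lemma "is a consequence of (iv) in Theorem~\ref{thm:properties_div}," and you have spelled out that standard balls-form-a-basis argument in detail, with the arguments of $D_p^f$ ordered correctly so the asymmetric triangle inequality and $D_p^f(X||X)=0$ apply. No issues.
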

\begin{proof}
    This is a consequence of (iv) in Theorem \ref{thm:properties_div}, that is, of the triangle inequality property of $D_p^f$.
\end{proof}
Since $\beta$ covers $\D$, the previous lemma implies that $\beta$ generates a topology on $\D$.  
\begin{definition}\rm
    Let $\tau_*$ be the topology of $\D$ generated by the basis $\beta=\{B(A, r)|a\in \D,\,\, r\in \R^+\}$ where $B(A, r)=\{C\in \D|D_p^{f}(A||C)< r\}$.
    Considering that in $B(A, r)$, $A$ is in the first argument of $D_p^f$ and $C$ is in the second one, we call this topology the 
    \emph{topology of $\D$ generated by the first balls}, or \emph{first topology of $\D$} in short.
\end{definition}

Now we are ready to prove that $\tilde{g}$ is stable in a continuity sense 
with respect to $\tau_*$ whenever the function $f$ 
defining the divergence $D_p^f$ satisfies an additional condition, given in the following definition. 

\begin{definition}\rm
    We say that a function $f:[0, \infty]\f [0, \infty]$ is \emph{$p$-increasing} if for every $n\in \N$ and $x=(x_i)\in\R_+^n$, 
    \begin{equation}\label{eq:condition}
        f(||x||_p)\leq ||(f(x_i))||_p\,.
    \end{equation}
\end{definition}

\begin{theorem}[Stability result for extensions of vectorizations]\label{thm:stability}
    Let $f:[0, \infty]\f [0, \infty)$ be a sub-diagonal and sub-additive function which is strictly increasing  
    and $p$-increasing. 
    Let $g:(\D_F, \tau)\f (X, \tau_X)$ be a continuous map where $\tau$  is the topology induced by the Wasserstein distance $d_p$ for some integer $1\leq p\leq \infty$ and $\tau_X$ is the topology induced by the metric on $X$. 
    Furthermore, let 
    $$\pi=\pi_F\circ\pi',$$ 
    where $\pi':(\D,\tau_*)\f (\D, \tau)$ 
    is the identity at the set level, and $\pi_F:(\D, \tau)\f (\D_F, \tau)$ is the projection in Definition~\ref{def:projection}.
    Then the map $$\tilde{g}:(\D, \tau_*)\f (X, \tau_X)$$ given by $$\tilde{g}=g\circ\pi$$ is continuous. 
\end{theorem}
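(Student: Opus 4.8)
\section*{Proof proposal}

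The plan is to use the factorization $\pi=\pi_F\circ\pi'$ and to prove continuity of the three maps $\pi'$, $\pi_F$ and $g$ separately, so that $\tilde g=g\circ\pi_F\circ\pi'$ is continuous as a composition. The map $g$ is continuous by hypothesis, and $\pi_F\colon(\D,\tau)\f(\D_F,\tau)$ is continuous because Lemma~\ref{lemma:projection_is_contraction} shows it is $1$-Lipschitz for $d_p$. Hence the whole statement reduces to showing that $\pi'\colon(\D,\tau_*)\f(\D,\tau)$ is continuous, and since $\pi'$ is the identity on the underlying set this just means $\tau\subseteq\tau_*$, i.e. every $d_p$-ball must be open in the first topology. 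For this it suffices to establish the following uniform quantitative fact: \emph{for every $\varepsilon>0$ there is $\rho>0$, depending only on $\varepsilon$, $f$ and $p$, such that $D_p^{f}(C||E)<\rho$ implies $d_p(C,E)<\varepsilon$}. Indeed, given a $d_p$-ball $B_{d_p}(A_0,\varepsilon_0)$ and a point $C$ in it with $d_p(A_0,C)=\varepsilon'<\varepsilon_0$, applying the fact with $\varepsilon=\varepsilon_0-\varepsilon'$ produces a first-ball $B(C,\rho)\in\beta$ with $C\in B(C,\rho)$ (as $D_p^{f}(C||C)=0$) and, by the triangle inequality for $d_p$, $B(C,\rho)\subseteq B_{d_p}(A_0,\varepsilon_0)$.

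To prove the quantitative fact, fix $\varepsilon>0$ and pick $\rho<f(\infty)$, to be shrunk at the end; note $f(\infty)>0$, since $f(x)\le x$ and $f\geq 0$ force $f(0)=0$ and $f$ strictly increasing then gives $f(\infty)>0$. Assume $D_p^{f}(C||E)<\rho$ and choose an injection $\gamma\colon C\f E$ realizing this strict inequality, so that $\sum_{c\in C}||c-\gamma(c)||_\infty^{p}+\sum_{e\in E\setminus\gamma(C)}f(||e-\lambda(e)||_\infty)^{p}<\rho^{p}$; in particular each of these two sums is $<\rho^{p}$ (read sums as suprema, and $\rho^p$ as $\rho$, when $p=\infty$). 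An infinite point of $C$ matched by $\gamma$ to a finite point would contribute $+\infty$ to the first sum, and an unmatched infinite point of $E$ would contribute $f(\infty)^{p}>\rho^{p}$ to the second; both being impossible, $\gamma$ restricts to a bijection between the infinite points of $C$ and those of $E$. Thus $C$ and $E$ have the same number of infinite points, $d_p(C,E)<\infty$, and inequality~\eqref{eq:bounds} is available. Every $e\in E\setminus\gamma(C)$ with nonzero contribution is then finite, so applying the $p$-increasing property to the finite family $\{\,||e-\lambda(e)||_\infty : e\in E\setminus\gamma(C)\,\}$ gives $f\bigl(\bigl(\sum_{e}||e-\lambda(e)||_\infty^{p}\bigr)^{1/p}\bigr)\leq\bigl(\sum_{e}f(||e-\lambda(e)||_\infty)^{p}\bigr)^{1/p}<\rho$, and inverting the strictly increasing continuous $f$ yields $\sum_{e\in E\setminus\gamma(C)}||e-\lambda(e)||_\infty^{p}<f^{-1}(\rho)^{p}$.

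Now I bound both divergences between $C$ and $E$. On one side, $D_p^{f}(C||E)<\rho$. On the other side, define an injection $\delta\colon E\f C$ by $\delta=\gamma^{-1}$ on $\gamma(C)$ and $\delta(e)=\lambda(e)$ for $e\in E\setminus\gamma(C)$, using distinct diagonal copies of $C$. Its image contains every off-diagonal point of $C$, so the ``second sum'' in the definition of $D_p^{f}(E||C)$ vanishes, whence $D_p^{f}(E||C)^{p}\leq\sum_{c\in C}||\gamma(c)-c||_\infty^{p}+\sum_{e\in E\setminus\gamma(C)}||e-\lambda(e)||_\infty^{p}<\rho^{p}+f^{-1}(\rho)^{p}$. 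Substituting both bounds into inequality~\eqref{eq:bounds} gives $d_p(C,E)\leq\bigl(D_p^{f}(C||E)^{p}+D_p^{f}(E||C)^{p}\bigr)^{1/p}<\bigl(2\rho^{p}+f^{-1}(\rho)^{p}\bigr)^{1/p}$; for $p=\infty$ the same estimates together with the identity $\max\{D_\infty^{f}(C||E),D_\infty^{f}(E||C)\}=d_\infty(C,E)$ give $d_\infty(C,E)<f^{-1}(\rho)$. Since $f$ is continuous, strictly increasing and $f(0)=0$, we have $f^{-1}(\rho)\to 0$ as $\rho\to 0^{+}$, so the right-hand side tends to $0$; choosing $\rho$ small enough (and $<f(\infty)$) makes it $<\varepsilon$, which proves the quantitative fact and hence the theorem.

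The main obstacle is exactly the estimate at the end of the second paragraph: passing from ``$\sum_e f(||e-\lambda(e)||_\infty)^{p}$ small'' to ``$\sum_e ||e-\lambda(e)||_\infty^{p}$ small'' is false for a generic sub-diagonal sub-additive $f$ (think of $f(x)$ comparable to $x^{2}$ near $0$), because $E$ may have arbitrarily many off-diagonal points near the diagonal; the $p$-increasing and strict-monotonicity hypotheses are introduced precisely so that this step works uniformly in $C$ and $E$. A secondary subtlety is that $D_p^{f}$ is asymmetric, so the bound on $D_p^{f}(E||C)$ cannot be inherited from the bound on $D_p^{f}(C||E)$ (nor from the contraction Lemma~\ref{lemma:projection_is_contraction}, which only controls the first argument) and must be produced by the explicit reverse injection $\delta$.
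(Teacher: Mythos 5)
Your proposal is correct and follows essentially the same route as the paper: reduce everything to continuity of the identity $\pi'$, choose the threshold via the inverse of the strictly increasing $f$ near $0$, take a (near-)optimal injection, and control the unmatched points through the $p$-increasing property after completing the matching by diagonal projections. The only real differences are cosmetic: you treat the infinite points and the possible non-attainment of the infimum more explicitly, and you finish by bounding the reverse divergence with the explicit injection $\delta$ and invoking the symmetrization inequality \eqref{eq:bounds}, whereas the paper bounds $d_p(A,B)$ directly by the cost of the induced bijection $\gamma'$ (whose inverse is exactly your $\delta$), so both arguments rest on the same estimate.
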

We summarize the previous theorem in the following diagram.
     $$
        \xymatrix
   { (\D\ar[d]_{\pi}, \tau_*) \ar@{-->}[dr]^{\tilde{g}} & \\
     (\D_F \ar[r]_{g}, \tau) &  (X,\tau_X) }
     $$

\begin{proof}
    Since $g$ is continuous, it is enough to prove that $\pi:(\D,\tau_*)\f (\D, \tau) $ is continuous. 
    We recall that $\pi=\pi_F\circ\pi'$. 
    Moreover, since $\pi_F$ is continuous by Lemma \ref{lemma:projection_is_contraction}, 
    we just need to prove that $\pi'$ is continuous. 
    It is easy to see that continuity of $\pi'$ can be put in epsilon-delta terms, 
    that is, $\pi'$ is continuous if and only if for every $A\in \D$ and 
    $\epsilon>0$ there exist $\delta >0$ such that if $D_p^{f}(A||B)<\delta$ then $d_p(A, B)<\epsilon$. 
    We are going to prove that $\pi'$ is continuous in this way. Let $A\in\D$ and $\epsilon$ be a positive real number. 
    Now, the conditions on $f$ guarantee that it is a homeomorphims onto its image. 
    In particular, its inverse is continuous at 
    zero, which in turn implies that there exists $\delta>0$ such that if $f(x)<\delta$, then $x<\epsilon$. 
    Now let $B\in\D$ be a persistence diagram such that $D_p^{f}(A||B)<\delta$ and let $\gamma:A\f B$ 
    the injection that realizes $D_p^{f}(A||B)$. Since $f(x)\leq x$, we have the following inequality 
    \begin{align}
        \delta>&\left(\sum_{a\in A}||a-\gamma(a)||_\infty^p+\sum_{b\in B\setminus \gamma(A)}f(||b-\lambda(b)||_\infty)^p\right)^{1/p}\\
        \geq& \left(\sum_{a\in A}f(||a-\gamma(a)||_\infty)^p+\sum_{b\in B\setminus \gamma(A)}f(||b-\lambda(b)||_\infty)^p\right)^{1/p}  
    \end{align}
\end{proof}
Note that the injection $\gamma$ induces a bijection $\gamma':A\f B$ whose inverse is the map $\eta:B\f A$ defined as 
\begin{equation*}
    \eta(b)=\left\{\begin{array}{ll}
         \gamma^{-1}(b)&\mbox{if } b\in \gamma(A)  \\
         \lambda(b)& \mbox{if } b\notin \gamma(A).
    \end{array} \right.
\end{equation*}
We then have that 
$$\left(\sum_{a\in A}f(||a-\gamma'(a)||_\infty)^p\right)^{1/p} <\delta.$$
Since only a finite number of terms in the sum is different from zero, by hypothesis we have that
$$f\left(\left(\sum_{a\in A}||a-\gamma'(a)||_\infty^p\right)^{1/p}\right)\leq\left(\sum_{a\in A}f(||a-\gamma'(a)||_\infty)^p\right)^{1/p}.$$
Because of the way in which we chose $\delta$, it follows that 
$$\left(\sum_{a\in A}||a-\gamma'(a)||_\infty^p\right)^{1/p}<\epsilon.$$ 
Finally, since $\gamma'$ is a bijection and the Wasserstein distance is defined as the minimum over all the bijections, $d_p(A, B)<\epsilon$, which completes the proof that $\pi'$ is continuous.

\begin{remark}
    When $p=1$ or $p=\infty$, the condition $f(||x||_p)\leq ||(f(x_i))||_p$ is satisfied for any 
    sub-diagonal and sub-additive function $f$.
    Indeed, since $||x||_1=x_1+\dots+x_n$, the case $p=1$ follows from the sub-additivity of $f$. 
    On the other hand, since $||x||_\infty=\max\{x_1,\dots,x_n\}$, the case $p=\infty$ follows from the monotonicity of $f$. 
\end{remark}

\begin{remark}
    Note that, since most of the stability theorems of vectorizations 
    are stated with respect to the bottleneck distance, 
    Theorem~\ref{thm:stability} guarantees the stability of the vectorization when we equip $\D$ with 
    the bottleneck divergence.
\end{remark}



\section{Conclusions and future work}
We have introduced the $(f,p)$-Wasserstein divergences, a family of divergence measures that generalize the famous $p$-Wasserstein distances 
to corresponding asymmetric distance-like functions.  
Using this, we have demonstrated for the first time that it is possible to extend the commonly-used statistical indicators for the examination
of the barcodes that result from the standard topological data analysis to indicators that are continuously defined even when one admits a different
number of bars of infinite lengths.  
Our results are interesting for two reasons: on the one hand, we have shown for the first time that it is possible to
show the stability directly with the original
barcodes (without the need of ``cutting'' them to reduce the bars of infinite length); on the other hand, the introduction of the $(f,p)$-Wasserstein
family of divergence functions may open the door to the development of new theoretical and algorithmic tools based on the minimization of
divergences between these barcodes.

 \section*{Acknowledgements}
M.~Mijangos would like to thank CONAHCyT for the financial support. 
A.~Bravetti acknowledges financial support by DGAPA-UNAM, program PAPIIT, Grant No.~IA-102823. 
P.~Padilla would like to thank DGAPA (PASPA) and Clare Hall at the University of Cambridge.

\bibliographystyle{plain}
\bibliography{references}
\end{document}